\documentclass[12pt]{article}

\usepackage{amsfonts}
\usepackage{amsmath, amsthm}
\usepackage{bbm,mathtools}
\usepackage{hyperref}



\oddsidemargin 0.0 in
\textwidth 6.5 in
\topmargin 0.0 in
\headheight 0.0 in
\headsep 0.0 in
\textheight 8.75 in

\newcommand{\comment}[1]{}

\newcommand{\R}{\mathbb{R}}

\newcommand{\C}{\mathbb{C}}

\newcommand{\DD}{\Delta}
\newcommand{\TD}{\tilde{\Delta}}

\newcommand{\TP}{\tilde{\Phi}}

\newcommand{\Gammatt}{\widetilde{\Gamma}}

\numberwithin{equation}{section}

\newtheorem{lemma}[equation]{Lemma}

\newtheorem{theorem}[equation]{Theorem}

\newtheorem{claim}[equation]{Claim}
\DeclareMathOperator{\Cay}{Cay}
\DeclareMathOperator{\Ch}{Ch}
\DeclareMathOperator{\Aut}{Aut}
\DeclarePairedDelimiter{\norm}{\lVert}{\rVert}

\sloppy

\theoremstyle{definition}
\newtheorem{remark}{Remark}

\newtheorem{definition}[equation]{Definition}

\newtheorem{corollary}[equation]{Corollary}

\begin{document}
\title{Li-Yau inequality on virtually Abelian groups}
\author{Gabor Lippner \and Shuang Liu}
\maketitle

\abstract{We show that Cayley graphs of virtually Abelian groups satisfy a Li-Yau type gradient estimate despite the fact that they do not satisfy any known variant of the curvature-dimension inequality with non-negative curvature. }

\section{Introduction}

Li and Yau~\cite{LiYau} proved an upper bound on the gradient of positive solutions of the heat equation on manifolds with Ricci curvature bounded from below.  The simplest variant of their result is
\begin{equation}
|\nabla\log u|^2-\partial_t(\log u)=\frac{|\nabla u|^2}{u^2} - \frac{\partial_t u}{u} \leq \frac{n}{2t}, \label{eq:ly}
\end{equation}
 where $u$ is a positive solution of the heat equation \mbox{$(\Delta - \partial_t)u = 0$} on an $n$-dimensional compact manifold with non-negative Ricci curvature. The proof is based on a specific property of such manifolds, the \emph{curvature-dimension inequality} (CD-inequality)
\begin{equation}\label{eq:cd}
\frac{1}{2}\Delta |\nabla f|^2 \geq \langle \nabla f, \nabla \Delta f \rangle + \frac{1}{n}(\Delta f)^2.
\end{equation} 

 It was an important insight by Bakry and Emery \cite{BakEm} that one can use it as a substitute for the lower Ricci curvature bound on spaces where a direct generalization of Ricci curvature is not available. The direct discrete version of the CD-inequality was introduced in~\cite{LinYau}. It is a local notion in the sense that it only depends on 2-step neighborhoods of the nodes of the graph. Its properties were subsequently studied in~\cite{ChLL}, where the authors showed that the discrete CD-inequality implies a weak Harnack-type inequality, but fell short of proving the Li-Yau gradient estimate.
 
In the break-through paper~\cite{BHLLMY} a variant of the CD-inequality was introduced: the so called the exponential curvature-dimension (CDE) inequality. This is still a local notion, its validity depends only on 2-step neighborhoods in the graph. However, for the first time, it was shown that this inequality implies a version of the Li-Yau gradient estimate. 

\begin{theorem}[\cite{BHLLMY}] \label{thm:compactgre} Let $G$ be a finite graph satisfying $CDE(n,0)$, and let $u$ be a positive solution to the heat equation on $G$. Then for all $t>0$
\begin{equation}\label{eq:liyau} \frac{\Gamma(\sqrt{u})}{u} - \frac{\partial_t(\sqrt{u})}{\sqrt{u}} \leq \frac{n}{2t}\ .\end{equation}
\end{theorem}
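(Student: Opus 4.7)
The plan is to adapt the classical Li--Yau argument to the graph setting via the parabolic maximum principle. Define the pressure-like quantity
\begin{equation*}
F(x,t) := t\left(\frac{\Gamma(\sqrt{u})(x,t)}{u(x,t)} - \frac{\partial_t \sqrt{u}(x,t)}{\sqrt{u}(x,t)}\right),
\end{equation*}
and aim to show $F\leq n/2$ pointwise on $G\times(0,\infty)$. Since $G$ is finite, for any fixed $T>0$ the function $F$ attains its maximum on $G\times[0,T]$ at some point $(x_0,t_0)$; letting $T\to\infty$ at the end gives the estimate for all $t>0$.

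If $t_0=0$, then $F(x_0,t_0)=0$ and there is nothing to prove, so assume $t_0>0$. The discrete parabolic maximum principle then yields $\partial_t F(x_0,t_0)\geq 0$ and $\Delta F(x_0,t_0)\leq 0$, so in particular $(\Delta-\partial_t)F(x_0,t_0)\leq 0$.

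The heart of the proof is to expand $(\Delta-\partial_t)F$ using (i) the heat equation for $u$ and (ii) the CDE$(n,0)$ hypothesis applied to $\sqrt{u}$. The exponential modification of the Bakry--Emery $\Gamma_2$ is engineered precisely to absorb the discrete chain-rule defects that obstruct the naive Li--Yau calculation, yielding a lower bound of the form $\widetilde{\Gamma}_2(\sqrt{u})\geq \frac{1}{n}(\Delta\sqrt{u})^2$ that can be substituted directly into the expansion. Collecting terms, one expects to arrive at an inequality of the shape
\begin{equation*}
0 \;\geq\; (\Delta-\partial_t)F(x_0,t_0) \;\geq\; \frac{2t_0}{n}\left(\frac{F(x_0,t_0)}{t_0}\right)^{\!2} \;-\; \frac{F(x_0,t_0)}{t_0},
\end{equation*}
from which $F(x_0,t_0)\leq n/2$ follows by solving the resulting quadratic.

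The main obstacle I anticipate is the algebraic step that converts the CDE$(n,0)$ bound into the coercive quadratic displayed above. On a graph the identity $\Delta\log v = \Delta v/v - \Gamma(v)/v^2$ fails, so one cannot simply translate the manifold proof term by term; the exponential modification in $\widetilde{\Gamma}_2$ is designed to absorb exactly these error terms. Verifying that the constants align, so that the surviving quadratic in $F$ really has the right sign and prefactor to close the argument, is the technical crux. Once this identity is in place, the remainder of the proof is routine maximum-principle bookkeeping.
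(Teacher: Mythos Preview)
The paper does not prove this theorem at all: it is quoted verbatim from \cite{BHLLMY} as background, and no argument is supplied here. So there is no ``paper's own proof'' to compare your proposal against.

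That said, your outline does track the strategy actually used in \cite{BHLLMY}: define the time-weighted Harnack quantity, apply the parabolic maximum principle on the finite graph, and use the $CDE(n,0)$ hypothesis to extract a quadratic inequality in $F$ at the maximum point. Two points where your sketch is genuinely incomplete relative to the real argument: first, the $CDE$ condition in \cite{BHLLMY} is only assumed at vertices where $\Delta f < 0$, so one must check (or arrange) that this sign condition holds at the maximizer $(x_0,t_0)$ before invoking it; second, the passage from $(\Delta-\partial_t)F$ to the displayed quadratic is not a single identity but requires the specific square-root chain-rule substitute $2\sqrt{u}\,\partial_t\sqrt{u}=\Delta u$ together with the definition of $\widetilde{\Gamma}_2$, and you have not shown that the cross terms actually cancel. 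Your instinct that this algebraic step is ``the technical crux'' is correct, but as written the proposal asserts the outcome rather than deriving it.
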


\subsection{Main result}

It was shown in~\cite{BHLLMY} that the curvature notion based on CDE-inequality behaves "as expected": complete graphs have positive curvature, lattices have 0 curvature, trees have negative curvature. However, the fact that the CDE-inequality only depends on 2-step neighborhoods leads to an unexpected and undesirable side-effect. The hexagonal lattice, and in more general Cayley-graphs of virtually-Abelian groups (these include periodic planar tilings, among others),  will not satisfy a CDE type inequality with non-negative curvature. This is completely counter-intuitive to the observation that these graphs are essentially flat and hence should ideally have 0 curvature. Our intuition is hence that they should satisfy a Li-Yau type gradient estimate. That is exactly what we show in this paper. For definitions and notation see Section~\ref{sec:notation}.

\begin{theorem}\label{thm:main} Let $\Phi$ be a virtually Abelian group and $\TP \leq \Phi$ a normal Abelian subgroup of index $k < \infty$. Let $S \subset \Phi$ be a finite, symmetric generating set and denote $G = \Cay(\Phi,S)$ the associated Cayley graph. Similarly, let $\tilde{S} \subset \TP$ be a finite, symmetric, conjugation-invariant generating set of $\TP$. There exist constants $K,C > 0$ such that for any solution $w : G \times [0,\infty) \to [0,1]$ of the heat equation on $G$, the shifted solution $u = w + \sqrt{k}$ satisfies
\[ 
\frac{\Gammatt(\sqrt{u})}{K u} - \frac{\partial_t(\sqrt{u})}{\sqrt{u}} \leq \frac{C}{t}\ .
\]
\end{theorem}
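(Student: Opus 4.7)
The plan is to reduce the estimate to the Li-Yau inequality of Theorem~\ref{thm:compactgre} applied on the Cayley graph $\tilde{G} := \Cay(\TP, \TS)$ of the abelian subgroup. Since $\TP$ is abelian and $\TS$ is a finite symmetric generating set, $\tilde{G}$ satisfies $CDE(n,0)$ for some $n$ (this is shown in~\cite{BHLLMY}), so the Li-Yau estimate holds for every positive solution of the $\tilde{G}$-heat equation $\partial_t \phi = \TD \phi$.

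Fix coset representatives $c_1 = e, c_2, \ldots, c_k$ of $\TP$ in $\Phi$, so that $\Phi$ decomposes as the disjoint union of the $k$ cosets $c_i \TP$. The crucial observation is that $\Gammatt$, being built from $\TS \subset \TP$, depends only on intra-coset neighbors: $\Gammatt(f)(x)$ reads off $f$ at points in the same coset as $x$. From the heat solution $w$ and the shifted $u = w + \sqrt{k}$ I would form an averaged positive function on $\tilde{G}$, e.g.\ $U(g,t) := \tfrac{1}{k}\sum_{i=1}^k u(c_i g, t)$. Because $\TS$ is conjugation-invariant and $\TP$ is normal, left-translation by each $c_i$ sends $\TS$-edges to $\TS$-edges, so $\TD$ commutes with this averaging operation. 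Moreover $U \geq \sqrt{k}$ pointwise, which is precisely why the shift by $\sqrt{k}$ is convenient: $\sqrt{U}$ is then trapped in the compact interval $[k^{1/4}, \sqrt{1+\sqrt{k}}]$, bounded away from $0$ and from above.

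The next step is to analyse how $U$ evolves under the $G$-heat equation $\partial_t w = \DD w$. The generators in $S \cap \TP$ produce the expected $\TD U$ contribution after averaging; the generators in $S \setminus \TP$ shuffle cosets, but by normality they do so in a coset-wise coherent manner, yielding a term that -- one expects -- is comparable to $\TD U$ up to a constant depending on $k$ and $|S|$. Applying Theorem~\ref{thm:compactgre} to $U$ (or to an auxiliary heat flow on $\tilde{G}$ whose Laplacian captures both contributions) yields a Li-Yau-type estimate $\Gammatt(\sqrt{U})/U - \partial_t\sqrt{U}/\sqrt{U} \leq n/(2t)$. Transferring back to $x \in c_i \TP$ via a Cauchy-Schwarz comparison, made quantitative by the uniform bounds on $\sqrt{u}$, relates $\Gammatt(\sqrt{u})(x)$ to $\Gammatt(\sqrt{U})$ at the corresponding point of $\tilde{G}$ and produces the multiplicative constant $K$ on the gradient side. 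The bounded additive discrepancy between the time-derivative terms is absorbed into the constant $C$ on the right.

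The main obstacle is the middle step: $U$ does not satisfy the heat equation on $\tilde{G}$ exactly, because the cross-coset generators in $S \setminus \TP$ genuinely mix the different ``layers'' of $u$. Showing that the resulting operator on $\tilde{G}$ nonetheless behaves like a Laplacian well enough for the BHLLMY Bochner-type computation to go through -- producing the desired $C/t$ bound after an admissible change of constants -- is the crux of the proof, and is precisely where the conjugation-invariance of $\TS$ and the specific shift $\sqrt{k}$ become indispensable.
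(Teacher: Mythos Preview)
Your plan diverges substantially from the paper's argument, and the gap you yourself flag is real and, as stated, unclosed. Two further difficulties make the averaging route unworkable in this form. First, even granting a Li--Yau bound for the coset-averaged $U$, the transfer back to $u$ goes the wrong way: by the convexity of $\Gammatt(\sqrt{\cdot})$ (which the paper proves as Lemma~\ref{lem:lincomb}) one has $\Gammatt(\sqrt{U}) \le \tfrac{1}{k}\sum_i \Gammatt(\sqrt{u(c_i\,\cdot\,)})$, so a bound on $\Gammatt(\sqrt{U})$ does not control $\Gammatt(\sqrt{u})$ at a single point. If $u$ oscillates across cosets in a way that cancels in the average, $U$ can be nearly constant while $\Gammatt(\sqrt{u})$ is large. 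Second, a ``bounded additive discrepancy'' in the time-derivative term cannot be absorbed into $C/t$: the right-hand side tends to $0$ as $t\to\infty$, so any $O(1)$ error would kill the estimate for large time.

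The paper avoids both issues by a spectral decomposition rather than averaging. It exploits that $\DD$ and $\TD$ commute (Claim~\ref{claim:commutes}) to build a joint eigenbasis $\{f_{\chi,j}\}$ indexed by characters $\chi$ of finite abelian quotients $\TP_n$; each $f_{\chi,j}$ satisfies $\DD f_{\chi,j} = \beta_{\chi,j}\,\TD f_{\chi,j}$ with $\beta_{\chi,j}>0$ bounded below by $1/K$ uniformly in $n$ (Lemma~\ref{lem:beta_bound}). This lets one write a periodic heat solution on $G$ as a \emph{genuine positive} linear combination of time-rescaled, space-shifted $\tilde{G}$-heat kernels $w(x*z^{-1},\beta_{\chi,j}t)$, after adding suitable constants $a_{\chi,j}$ to each eigenfunction. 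The total shift incurred is at most $\sqrt{k}\|g\|_2$, which is the source of the $\sqrt{k}$ in the statement. Now convexity of $\Gammatt(\sqrt{\cdot})$ is applied in the correct direction -- to a positive combination of functions each obeying Li--Yau -- and a limiting argument (Claim~\ref{claim:limit}) removes the periodicity assumption. In short: the paper writes $u$ itself (shifted) as a positive mixture of $\tilde{G}$-heat solutions, rather than trying to extract a single $\tilde{G}$-heat solution from $u$ by averaging.
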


The idea of the proof is the following. Let $\tilde{G} = \Cay(\Phi, \tilde{S})$ denote the graph on the full group obtained by generators of the Abelian subgroup. From~\cite{BHLLMY} we know that any positive solution of the heat equation on $\tilde{G}$ satisfies \eqref{eq:liyau} for $n = 2|\tilde{S}|$. We will express solutions of the heat equation on $G$ as a linear combination of solutions of the heat equation on $\tilde{G}$. Then we show that positive linear combinations preserve the validity of \eqref{eq:liyau}. Finally we show that shifting the original solution by a positive constant allows us to turn the original linear combination into a positive linear combination.

 In general, \eqref{eq:ly} is a stronger conclusion than \eqref{eq:liyau}, and it is an easy computation to show that the direct discrete analogue of \eqref{eq:ly} does not hold on graphs.  However, on a manifold, the inequality~\eqref{eq:ly} is equivalent to 
\begin{equation}\label{eq:logly} -\Delta \log u(x,t) \leq n/2t, \end{equation} and  
M\"unch~\cite{Munch} found a new variant of the CD-inequality that implies \eqref{eq:logly} for finite graphs. In particular, he shows that for finite, locally Abelian graphs there exists a constant $n$ depending only on the degree such that \eqref{eq:logly} holds. We shall use this result to prove the following analogue of Theorem~\ref{thm:main}.

\begin{theorem}\label{thm:logmain} 
Under the conditions of Theorem~\ref{thm:main}, we have
\[ -\tilde{\Delta} \log u \leq \frac{C}{t}.\]
\end{theorem}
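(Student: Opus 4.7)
The plan is to mirror the three-part strategy that proves Theorem~\ref{thm:main}, substituting M\"unch's logarithmic Laplacian bound \eqref{eq:logly} for the Li--Yau gradient estimate \eqref{eq:liyau}. The first step is to reuse verbatim the decomposition from the proof of Theorem~\ref{thm:main}: after shifting by $\sqrt{k}$, we can write
\[ u = w + \sqrt{k} = \sum_j c_j \tilde{u}_j, \qquad c_j > 0, \]
where each $\tilde{u}_j > 0$ is a solution of the heat equation on $\tilde{G}$. Because $\tilde{S}\subset\TP$ generates an Abelian subgroup, every connected component of $\tilde{G}$ is a Cayley graph of an Abelian group, so $\tilde{G}$ is locally Abelian with uniform degree $d=|\tilde{S}|$. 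M\"unch's theorem (either directly or through its extension to the vertex-transitive setting) therefore yields a constant $C=C(d)$ with
\[ -\TD\log \tilde{u}_j(x) \leq \frac{C}{t} \qquad \text{for every } j,\; x,\; t>0. \]

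The core of the argument is then a pointwise preservation lemma: \emph{if $f_j>0$ each satisfy $-\TD\log f_j\leq C/t$ and $c_j\geq 0$, then $f=\sum_j c_j f_j$ satisfies $-\TD\log f\leq C/t$ as well.} Unwinding, the hypothesis on $f_j$ is equivalent to
\[ f_j(x) \leq e^{C/(dt)} \Bigl(\prod_{y\sim x} f_j(y)\Bigr)^{1/d}, \]
where the product ranges over the $d$ neighbors $y$ of $x$ in $\tilde{G}$. Using the homogeneity identity $c_j\bigl(\prod_y f_j(y)\bigr)^{1/d}=\bigl(\prod_y c_j f_j(y)\bigr)^{1/d}$ and summing over $j$ yields
\[ f(x) \leq e^{C/(dt)} \sum_j \Bigl(\prod_{y\sim x} c_j f_j(y)\Bigr)^{1/d}. \]
The last ingredient is the superadditivity of the geometric mean,
\[ \sum_j \Bigl(\prod_y a_{j,y}\Bigr)^{1/d} \leq \Bigl(\prod_y \sum_j a_{j,y}\Bigr)^{1/d} \qquad (a_{j,y}\geq 0), \]
which iterates the two-summand case $\sqrt{(a_1+b_1)(a_2+b_2)} \geq \sqrt{a_1 a_2} + \sqrt{b_1 b_2}$ (equivalent to AM--GM). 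This upgrades the previous bound to $f(x) \leq e^{C/(dt)} \bigl(\prod_{y\sim x} f(y)\bigr)^{1/d}$, i.e. $-\TD\log f\leq C/t$.

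Applying the preservation lemma to the decomposition $u=\sum_j c_j\tilde{u}_j$ completes the proof, with the very same constant $C$ coming out of M\"unch's theorem. The main anticipated obstacle is justifying M\"unch's log-Laplacian bound on the (disconnected, possibly infinite) graph $\tilde{G}$: one must extend his finite-graph statement to this setting. Since the constant depends only on the degree, exhausting each connected component by finite subgraphs, or by finite quotients of $\TP$, should suffice with uniform $C(d)$. Everything else is purely algebraic and flows from the concavity of $\log$, equivalently the superadditivity of the geometric mean; no new curvature input beyond the single-component M\"unch bound is required.
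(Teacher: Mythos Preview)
Your proposal is correct and follows essentially the paper's own argument: the positive-combination decomposition from Theorem~\ref{thm:main}, M\"unch's bound on each $\tilde G$-solution (extended from finite graphs via periodic approximation and a limit, exactly as you anticipate), and preservation of the bound under positive linear combinations. Your geometric-mean superadditivity argument for this last step is a cleaner and more explicit version of what the paper compresses into the phrase ``$\TD$ is linear and $\log$ is concave, so $-\TD\log$ is convex.''
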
 




\subsection{Notation}\label{sec:notation}

For a given locally finite graph $G$ we define the Laplace operator $\Delta = \Delta_G$ acting on a function $f : G \to \R$ as 
\[ \Delta f (x) = \sum_{y \sim x} f(y) - f(x). \]
We will often consider a graph $\tilde{G}$ on the same vertex set at the same time. For convenience we will often abbreviate $\Delta_{\tilde{G}}$ as $\TD$. 

For a given function $g: G \to [0,\infty)$ the heat equation for $u : G \times [0,\infty) \to [0,\infty)$ with initial condition $g$ is the system
\[ \Delta u(x,t) = \partial_t u(x,t), \quad u(x,0) = g(x)\]
where $\Delta$ acts on the first variable of $u$. 

The gradient operator $\Gamma = \Gamma_G$ is defined as 
\[ \Gamma(f)(x) = \sum_{y \sim x} (f(y)-f(x))^2.\]
We will also use the notation $\Gammatt = \Gamma_{\tilde{G}}$.

Throughout the paper we consider a fixed virtually Abelian group $\Phi$ with a finite, symmetric generating set $S$. The unit element will always be denoted by 1. The Cayley graph associated to this generating set will be denoted by $G$. (Edges are given by multiplication by generators on the right.) Since $\Phi$ is virtually Abelian, it has a finite index free Abelian normal subgroup $\TP \leq \Phi$. We fix a generating set $\tilde{S}$ for $\TP$, and denote by $\tilde{G}$ the graph whose vertex set is the same as that of $G$, but the edges are given by multiplication by elements of $\tilde{S}$. Thus, $\tilde{G}$ is a disjoint union of finitely many copies of $\Cay(\TP,\tilde{S})$. 

Every element $x \in \Phi$ defines an automorphism $\phi_x : \TP \to \TP$ by the map $y \mapsto x^{-1} * y * x$. The map $x \mapsto \phi_x$ defines a representation $\Phi \to \Aut(\TP)$ that factors through the quotient $\Phi / \TP$, hence there are at most $k$ different automorphisms obtainable this way and they form a group. 

We will assume that the set $\tilde{S}$ is invariant under the automorphisms $\phi_x$. By the previous remarks such a finite generating set always exists. For example, if $\TP$ is in the center of $\Phi$, any finite generating set can be chosen.

\begin{remark}\label{rem:commutes}
A simple consequence of the invariance of $\tilde{S}$ is that for any $z \in \Phi$ the following two sets are the same:
\begin{equation}\label{eq:set_commute} \{ z*s : s\in \tilde{S}\} = \{s*z : s\in \tilde{S}\}.\end{equation}
Let $f : G \to \C$ be any function, and let $f_z(x) = f(x * z)$. Then \eqref{eq:set_commute} immediately implies the following identities: 
\[ \TD f_z(x) = (\TD f )(x *z),\] and  
\[ \Gammatt(f_z)(x) = \Gammatt(f)(x*z).\]
\end{remark}

\section{Relating the heat equation on $G$ and $\tilde{G}$}

In this section we explain how to obtain a solution to the heat equation on $G$ as a (possibly infinite) linear combination of solutions to the heat equation on $\tilde{G}$. 

Let $w(x,t) : \tilde{G} \times [0,\infty)$ be the solution of $\TD w = \partial_t w$ with the initial condition $w(x,0) = \delta_{x = 1}$. As a first attempt, let us fix a $\beta > 0$ and try to construct a solution to $\DD u = \partial_t u$ in the form
\begin{equation}\label{eq:u_constr}u(x,t) = \sum_{z \in \Phi} f(z) w(x * z^{-1},\beta t).\end{equation}
Uniform convergence of this sum will be ensured by choosing a nonnegative bounded weight function $f : \Phi \to \R_{\geq 0}$. That is sufficient, since $w$ decays super-exponentially in space according to the Carne-Varopoulos bound~\cite{Carne}. 

Now we can compare $\Delta u$ and $\partial_t u$, and find a sufficient condition on $f$ that ensures $u$ is a solution of the heat equation.

\begin{multline}\label{eq:delta_u} \Delta u (x,t) = \sum_{s \in S} u(x*s)-u(x) = \sum_{z\in \Phi} \sum_{s\in S} f(z) ( w(x*s*z^{-1},\beta t) - w(x * z^{-1},\beta t)) = \\ \sum_{z\in \Phi} f(z)\sum_{s\in S} (w(x*(z*s^{-1})^{-1},\beta t) - w(x* z^{-1},\beta t)) =  \\ \sum_{z\in \Phi} w(x*z^{-1},\beta t) \sum_{s \in S} f(z*s) - f(z) = \sum_{z \in \Phi} (\Delta f)(z) \cdot w(x*z^{-1},\beta t)  \end{multline}

In order to compute the time derivative, we will exploit that $w$ satisfies $\TD w = \partial_t w$, as well as Remark~\ref{rem:commutes}. Let us temporarily denote $w_z(x,t) = w(x*z^{-1},t)$.

\begin{multline} \label{eq:partial_u}
\partial_t u(x,t) = \sum_{z \in \Phi} f(z) \beta \cdot (\partial_t w)(x * z^{-1}, \beta t) = \sum_{z \in \Phi} f(z) \beta \cdot (\TD w)(x * z^{-1}, \beta t) = \\ \beta \sum_{z \in \Phi} f(z) \TD w_z(x,\beta t) = \beta \sum_{z \in \Phi} (\TD f)(z) w_z(x,\beta t) = \beta \sum_{z \in \Phi} (\TD f)(z) w(x*z^{-1},\beta t),
%
%
%
\end{multline}
Thus combining \eqref{eq:delta_u} and \eqref{eq:partial_u} leads to the following observation.
\begin{lemma}\label{lem:f_beta}
If $\beta \TD f = \Delta f $ then the function $u$ defined in \eqref{eq:u_constr} satisfies $\Delta u = \partial_t u$.
\end{lemma}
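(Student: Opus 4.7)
The plan is to combine the two pointwise identities already computed in \eqref{eq:delta_u} and \eqref{eq:partial_u}. Both have the same form: a sum of $w(x*z^{-1},\beta t)$ weighted either by $(\Delta f)(z)$ or by $\beta(\TD f)(z)$. Once the hypothesis $\beta \TD f = \Delta f$ is imposed, the two sums agree term-by-term, giving $\Delta u = \partial_t u$. So the work of the lemma is really packaged into verifying that the two displayed calculations are valid, and then reading off the conclusion.

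First I would justify analytically that $u$ as defined by \eqref{eq:u_constr} is well-defined and differentiable in $t$, with term-by-term differentiation permitted. The Carne--Varopoulos bound gives super-exponential spatial decay of $w(x*z^{-1},\beta t)$ in $z$, and boundedness of $f$ therefore lets me apply dominated convergence both to (a) interchange the finite sum over $s\in S$ with the infinite sum over $z\in \Phi$ in the computation of $\Delta u$, and (b) differentiate under the sum in $t$. The same decay bounds make the reindexing step $z \mapsto z*s$ in \eqref{eq:delta_u}, which produces a discrete integration-by-parts identity yielding $(\Delta f)(z)$ as the coefficient of $w(x*z^{-1},\beta t)$, legitimate.

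For the time derivative I would use that $w$ satisfies $\TD w = \partial_t w$ on $\tilde{G}$ to turn $\partial_t u$ into $\beta \sum_z f(z) (\TD w)(x*z^{-1},\beta t)$. At this point I invoke Remark~\ref{rem:commutes}: since $\tilde{S}$ is conjugation-invariant, the identity $\TD w_z(x,t) = (\TD w)(x*z^{-1},t)$ holds, so the $\TD$ can be moved from $w$ onto the weights $f$ by a second reindexing (again a simple relabeling of $\Phi$), producing $\beta \sum_z (\TD f)(z) w(x*z^{-1},\beta t)$. The hypothesis then identifies this with the expression for $\Delta u$ derived above, and comparing concludes $\Delta u = \partial_t u$.

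The main potential obstacle is the two reindexings, which are only as innocent as the invariance condition on $\tilde{S}$ makes them: the $\TD$-reindexing in particular would fail without Remark~\ref{rem:commutes}, since for a non-Abelian $\Phi$ the sets $\{z*s\}$ and $\{s*z\}$ need not coincide. Everything else is routine bookkeeping supported by the Carne--Varopoulos decay.
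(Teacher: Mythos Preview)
Your proposal is correct and follows exactly the paper's approach: the lemma is stated immediately after \eqref{eq:delta_u} and \eqref{eq:partial_u} as the direct consequence of comparing those two computations under the hypothesis $\beta\TD f=\Delta f$. If anything, you are more careful than the paper, which handles the convergence and term-by-term differentiation issues with a single sentence about Carne--Varopoulos decay and bounded $f$.
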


The next step is to find a family of functions $f$ that satisfy the conditions of Lemma~\ref{lem:f_beta}. This will be facilitated by the observation that $\Delta$ and $\TD$ commute. Thus we will be able to find functions $f$ that satisfy the condition of Lemma~\ref{lem:f_beta} by constructing joint eigenfunctions of $\Delta$ and $\TD$. 

\begin{claim}\label{claim:commutes} $\Delta\widetilde{\Delta} f =\widetilde{\Delta}\Delta f $ for any function $f$.
\end{claim}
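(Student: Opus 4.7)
The plan is to derive the commutation directly from Remark~\ref{rem:commutes}, which (via conjugation-invariance of $\tilde S$) asserts that for any $z \in \Phi$ and any function $f$ on the vertex set, the shifted function $f_z(x) := f(x*z)$ satisfies $\TD(f_z)(x) = (\TD f)(x*z)$. This is precisely the tool needed, since $\Delta f(x)$ is built out of the values of $f$ at the points $x*s$ for $s \in S$.

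First I would rewrite $\Delta f = \sum_{s \in S} f_s - |S|\, f$, where $f_s(x) = f(x*s)$. Applying $\TD$ and using its linearity yields
\[
\TD \Delta f(x) \;=\; \sum_{s \in S} \TD(f_s)(x) \;-\; |S|\,\TD f(x).
\]
Now, since each $s \in S \subset \Phi$, Remark~\ref{rem:commutes} applies with $z = s$ and gives $\TD(f_s)(x) = (\TD f)(x*s)$. Substituting this identity produces
\[
\TD \Delta f(x) \;=\; \sum_{s \in S} (\TD f)(x*s) \;-\; |S|\,\TD f(x).
\]

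On the other hand, $\Delta \TD f(x)$ unfolds directly from the definition of $\Delta$ as $\sum_{s \in S} \bigl((\TD f)(x*s) - \TD f(x)\bigr)$, which equals the same right-hand side. Comparing the two expressions gives the claimed equality.

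There is no real obstacle: the only thing to verify is that the hypotheses of Remark~\ref{rem:commutes} apply, and they do because $S \subset \Phi$. The deeper point — that the remark itself rests on the conjugation-invariance of $\tilde S$ — is already absorbed into the statement of the remark, so here it plays no visible role. If one preferred to avoid invoking the remark, one could instead expand both double sums $\sum_{s \in S}\sum_{\tilde s \in \tilde S} f(x*s*\tilde s)$ and $\sum_{\tilde s \in \tilde S}\sum_{s \in S} f(x*\tilde s*s)$ and match terms using the bijection $\tilde s \mapsto s^{-1}\tilde s\, s$ of $\tilde S$, but the route through the remark is shorter.
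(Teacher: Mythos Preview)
Your argument is correct and is essentially the same as the paper's: the paper expands $\Delta\TD f$ and $\TD\Delta f$ as the double sums $\sum_{s\in S}\sum_{\tilde s\in \tilde S} f(x*\tilde s*s)$ and $\sum_{s\in S}\sum_{\tilde s\in \tilde S} f(x*s*\tilde s)$ and invokes Remark~\ref{rem:commutes} to identify them, which is exactly the ``alternative'' you sketch at the end and is equivalent to your operator-level use of the identity $\TD(f_s)(x)=(\TD f)(x*s)$.
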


\begin{proof} Writing out the definitions, what we need to check is that
\[ \sum_{s \in S} \sum_{\tilde{s} \in \tilde{S}} f(x * \tilde{s} * s) = \sum_{s \in S} \sum_{\tilde{s} \in \tilde{S}} f(x * s * \tilde{s}),\]
but this follows immediately from Remark~\ref{rem:commutes}.
\end{proof}

\section{Constructing periodic solutions}

 In this section we will build solutions to the heat equation on $G$ that have almost arbitrary ``periodic'' initial conditions. Fix $n > 1$. Denote  by $\Phi_n$  the finite quotient $\Phi_n = \Phi / \langle s^n : s \in \tilde{S}\rangle$ and by $\TP_n$ the finite Abelian quotient $\TP_n = \TP/ \langle s^n : s \in \tilde{S}\rangle$). The associated graphs will be denoted by $G_n$ and $\tilde{G}_n$ respectively. Let us introduce the quotient map by $\pi_n : \Phi \to \Phi_n$.
 
\begin{definition}
We say that a function $g: \Phi \to \C$ is ($n-$)\textit{periodic} if $g(x) = g(x * s^n)$ for any $s \in \tilde{S}$. Since $\TP$ is normal in $\Phi$, this is equivalent to saying that $g(s^n * x) = g(x)$ for all $s \in \tilde{S}$. It is also equivalent to the existence of a function $h : \Phi_n \to \C$ such that $g = h \circ \pi_n$.
\end{definition}

It is easy to check that the operators $\Delta$ and $\TD$ descend to $G_n$ and that, for either of these operators, a periodic function $g : \Phi \to \C$ is an eigenfunction if and only if it is a lift of an eigenfunction $h : \Phi_n \to \C$.

Let \[\Ch(\TP_n) = \{ \chi : \TP_n \to \C : \chi(x *y) = \chi(x)\chi(y)\}\] denote the set of multiplicative characters on $\TP_n$.
 Fix a multiplicative character $\chi : \in \Ch(\TP_n)$. Next we consider the possible extensions $\chi$ to each coset of $\TP$ as follows. Define the complex vector space 
\begin{equation}\label{eq:inv} V_\chi = \{ g: \Phi_n \to \C  : \forall x \in \Phi_n, \forall s \in \tilde{S}, g(s*x) = \chi(s) g(x) \}.\end{equation}
Clearly for any $g \in V_\chi$ we have, by \eqref{eq:set_commute},
\[ \TD g(x) = \sum_{s \in \tilde{S}} g(x*s) -g(x) = \sum_{s\in \tilde{S}} g(s*x)-g(x) = \lambda_\chi g,\] where $\lambda_\chi = \sum_{s\in \tilde{S}} \chi(s)-1$. It is also clear from the symmetry of $\tilde{S}$ that $\lambda_\chi \in \R_{\leq 0}$, and $\lambda_\chi = 1$ if and only if $\chi \equiv 1$. Let $\mathbbm{1}$ denote the constant 1 character.

We have  $\dim V_\chi =  |\Phi_n : \TP_n| = k$,  and the formula \begin{equation}\label{eq:scalar prod} \langle g,h \rangle = \sum_{x \in \Phi_n} g(x) \overline{h(x)}\end{equation} defines a scalar product on $V_\chi$. 

\begin{claim} $V_\chi$ is an invariant subspace for $\Delta$. 
\end{claim}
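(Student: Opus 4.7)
The plan is to verify directly that the defining condition of $V_\chi$ in \eqref{eq:inv} is preserved by $\Delta$. That is, given $g \in V_\chi$, I would show that $(\Delta g)(s*x) = \chi(s)(\Delta g)(x)$ holds for every $x \in \Phi_n$ and every $s \in \tilde{S}$; this is exactly the requirement for $\Delta g$ to belong to $V_\chi$.

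The computation is a two-step application of the defining property. First, expand
\[
(\Delta g)(s*x) = \sum_{t \in S} g(s*x*t) - g(s*x),
\]
and regroup $s*x*t = s*(x*t)$. Then apply the defining property of $V_\chi$ once with $y = x*t \in \Phi_n$ to rewrite $g(s*(x*t)) = \chi(s)\, g(x*t)$, and once with $y = x$ to rewrite $g(s*x) = \chi(s)\, g(x)$. Factoring out the common scalar $\chi(s)$ from every term in the sum gives
\[
(\Delta g)(s*x) = \chi(s) \Bigl( \sum_{t \in S} g(x*t) - g(x) \Bigr) = \chi(s)(\Delta g)(x),
\]
which is the claim.

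There is no genuine obstacle here; the only point worth noting is that the twisting identity $g(s*y) = \chi(s) g(y)$ is assumed for arbitrary $y \in \Phi_n$, so it can be applied equally well at $y = x$ and at each neighbor $y = x*t$, regardless of whether the generator $t \in S$ happens to lie in $\tilde{\Phi}$. In particular, we do not need to invoke Remark~\ref{rem:commutes} for this claim, since we are shifting on the left by $s$ and the right-multiplication by $t \in S$ defining the edges simply commutes with this on the level of the group operation.
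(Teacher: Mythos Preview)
Your proposal is correct and is essentially identical to the paper's own proof: both expand $(\Delta g)(s*x)$ via the definition of $\Delta$, apply the defining relation $g(s*y)=\chi(s)g(y)$ of $V_\chi$ at $y=x*t$ and $y=x$, and factor out $\chi(s)$. The only difference is that you spell out the justification in more detail, including the remark that Remark~\ref{rem:commutes} is not needed here.
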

\begin{proof}
For any $s \in \tilde{S}$ we have 
\[ \Delta g( s*x) = \sum_{t \in S} g(s*x*t) - g(s*x) = \sum_{t \in S} \chi(s) g(x*t) - \chi(s) g(x) = \chi(s) \Delta g(x).\]
\end{proof}

Thus $\Delta$ is a negative definite, self-adjoint operator on $V_\chi$ so there are $k$ eigenfunctions $f_{\chi,1},\dots, f_{\chi, k} \in V_\chi$ of $\Delta$ with respective eigenvalues $\lambda_{\chi,1}, \dots, \lambda_{\chi,k}$ that form an orthonormal basis with respect to the scalar product \eqref{eq:scalar prod}. 

Since different characters are orthogonal on $\TP_n$, and there are exactly $|\TP_n|$ of them, we get that the system
\[ \{ f_{\chi,j}  : \chi \in \Ch(\TP_n), 1\leq j \leq k\} \] forms an orthonormal basis of $\Delta$-eigenfunctions on $G_n$. One virtue of this particular eigenbasis is that we can bound the supremum norm of its elements.

\begin{lemma}\label{lem:sup_bound}
$\norm{f_{\chi,j}}_{\infty} \leq \sqrt{k/|\Phi_n|}$
\end{lemma}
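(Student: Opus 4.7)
The plan is to use the defining transformation property of $V_\chi$ to show that $|f_{\chi,j}|$ is constant on cosets of $\TP_n$ in $\Phi_n$, and then extract the sup norm bound from the $L^2$ normalization coming from orthonormality.

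First I would observe that although $V_\chi$ is defined using only the generators $\tilde{S}$, the relation $g(s*x) = \chi(s) g(x)$ extends multiplicatively to all of $\TP_n$. Indeed, $\tilde{S}$ generates $\TP_n$, so any $\sigma \in \TP_n$ can be written as a product $s_1 * s_2 * \cdots * s_m$ with $s_i \in \tilde{S}$, and iterating the defining identity together with the fact that $\chi$ is a multiplicative character yields $g(\sigma * x) = \chi(\sigma) g(x)$ for every $g \in V_\chi$. Since $\chi$ takes values on the unit circle (as a character of the finite Abelian group $\TP_n$), this gives $|g(\sigma * x)| = |g(x)|$; in other words, $|g|$ is constant on each left coset of $\TP_n$ in $\Phi_n$.

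Next I apply this to $g = f_{\chi,j}$. Using that $\TP_n$ is normal of index $k$, the group $\Phi_n$ decomposes into $k$ cosets of $\TP_n$, each of size $|\TP_n| = |\Phi_n|/k$. Let $M_1,\ldots,M_k$ denote the (constant) values of $|f_{\chi,j}|$ on these cosets. The orthonormality of the basis with respect to the scalar product \eqref{eq:scalar prod} gives
\[
1 = \langle f_{\chi,j}, f_{\chi,j}\rangle = \sum_{x \in \Phi_n} |f_{\chi,j}(x)|^2 = \frac{|\Phi_n|}{k}\sum_{i=1}^k M_i^2.
\]
Therefore $\sum_{i=1}^k M_i^2 = k/|\Phi_n|$, which in particular forces $M_i^2 \leq k/|\Phi_n|$ for every $i$, and hence $\|f_{\chi,j}\|_\infty = \max_i M_i \leq \sqrt{k/|\Phi_n|}$.

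There is essentially no obstacle here; the only point that deserves care is the justification that the $\tilde{S}$-equivariance promoted to a $\TP_n$-equivariance, which is where the multiplicativity of $\chi$ and the generation of $\TP_n$ by $\tilde{S}$ are both used. After that, the bound is a one-line consequence of distributing a unit $\ell^2$-mass as evenly as possible over $k$ level sets of equal size.
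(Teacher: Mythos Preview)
Your proof is correct and follows essentially the same argument as the paper: both observe that $|f_{\chi,j}|$ is constant on each of the $k$ cosets of $\TP_n$ and then read off the bound from the $L^2$ normalization. You actually supply more detail than the paper does in justifying the constancy on cosets (promoting the $\tilde{S}$-equivariance to full $\TP_n$-equivariance via multiplicativity of $\chi$), which the paper simply asserts ``by construction.''
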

\begin{proof}
Let's write $f = f_{\chi,j}$ for short. 
We know by construction that $|f|$ is constant along each coset of $\TP_n$. Let these constants be $c_1,\dots, c_k \geq 0$. Then we can write 
\[ 1 = \sum_{x \in \Phi_n} |f(x)|^2 = \sum_{i=1}^k c_i^2 |\TP_n| \geq \max_i(c_i^2) |\TP_n| \]
Thus 
\[ \norm{f}_{\infty} = \max_i(c_i) \leq \sqrt{\frac{1}{|\TP_n|}} = \sqrt{\frac{k}{|\Phi_n|}}.\]
\end{proof}

For each $f_{\chi,j}$ we define 
\[ 0 < \beta_{\chi,j} = \lambda_{\chi,j}/\lambda_\chi\] 
This can be done unless $\chi = \mathbbm{1}$.  The positivity follows from the fact that both $\lambda_\chi$ and $\lambda_{\chi,j}$ are negative. The subspace $V_\mathbbm{1}$ is special, it contains all the functions that are constant along the cosets of $\TP_n$. We may assume that $f_{\mathbbm{1},1}$ is the constant function and define $\beta_{\mathbbm{1},1} = 1$. Thus the following statement holds.

\begin{claim}\label{claim:f_beta}
\[ \beta_{\chi,j} \TD f_{\chi,j} = \Delta f_{\chi,j},\] unless $\chi = \mathbbm{1}$ and $j \geq 2$. Hence the same also holds for $c + f_{\chi,j} \circ \pi_n : G \to \C$ where $c$ is an arbitrary constant, so these functions satisfy the condition Lemma~\ref{lem:f_beta}. 
\end{claim}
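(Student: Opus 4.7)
The plan is to observe that each $f_{\chi,j}$ is, by construction, a joint eigenfunction of $\Delta$ and $\TD$ on $G_n$, so the claimed identity on $\Phi_n$ reduces to a ratio of eigenvalues; I would then lift it back to $\Phi$ through $\pi_n$.

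First I would write down the two eigenvalue identities available on $G_n$. Since $f_{\chi,j}\in V_\chi$, the short calculation immediately after \eqref{eq:inv} gives $\TD f_{\chi,j} = \lambda_\chi f_{\chi,j}$, while $\Delta f_{\chi,j} = \lambda_{\chi,j} f_{\chi,j}$ holds by construction. If $\chi \neq \mathbbm{1}$ then $\lambda_\chi < 0$, so dividing yields $\Delta f_{\chi,j} = \beta_{\chi,j}\TD f_{\chi,j}$ with $\beta_{\chi,j} = \lambda_{\chi,j}/\lambda_\chi$. In the remaining allowed case $\chi = \mathbbm{1},\, j = 1$, I would use that $f_{\mathbbm{1},1}$ is the constant function, so both sides vanish and the identity is trivial with the prescribed value $\beta_{\mathbbm{1},1} = 1$. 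The excluded case $\chi = \mathbbm{1}$, $j \geq 2$ is precisely where $\TD f_{\mathbbm{1},j} = 0$ while $\Delta f_{\mathbbm{1},j} \neq 0$, so no finite $\beta$ could possibly work, which is why that exception has to be stated.

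For the second half I would lift the identity from $\Phi_n$ to $\Phi$. Because both Cayley generating sets $S$ and $\tilde{S}$ are compatible with the quotient map $\pi_n$, the operators $\Delta$ and $\TD$ commute with pullback in the sense that $\Delta(h \circ \pi_n) = (\Delta h)\circ \pi_n$ and $\TD(h \circ \pi_n) = (\TD h)\circ \pi_n$ for every $h:\Phi_n \to \C$. Both operators also annihilate constants, so I would conclude
\[ \Delta(c + f_{\chi,j}\circ\pi_n) = (\Delta f_{\chi,j})\circ\pi_n = \beta_{\chi,j}(\TD f_{\chi,j})\circ\pi_n = \beta_{\chi,j}\TD(c + f_{\chi,j}\circ\pi_n),\]
which is exactly the hypothesis of Lemma~\ref{lem:f_beta}. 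The argument is essentially mechanical; the only subtlety is that the exclusion of $(\mathbbm{1}, j\ge 2)$ is forced by the degeneracy of $\TD$ on the subspace $V_\mathbbm{1}$, rather than a genuine obstacle.
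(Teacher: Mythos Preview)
Your proposal is correct and follows exactly the reasoning the paper intends: the claim is stated in the paper without a separate proof, as an immediate consequence of the preceding paragraph (the eigenvalue identities $\TD f_{\chi,j}=\lambda_\chi f_{\chi,j}$ and $\Delta f_{\chi,j}=\lambda_{\chi,j} f_{\chi,j}$, the definition $\beta_{\chi,j}=\lambda_{\chi,j}/\lambda_\chi$, and the special handling of $\chi=\mathbbm{1}$). Your explicit lift through $\pi_n$ and the remark that constants are annihilated are precisely the details the paper suppresses.
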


Now let $g : G_n \to \C$ be a function that is orthogonal to $f_{\mathbbm{1},j}: j\geq 2$. Then we can express it as a linear combination of all the other eigenfunctions:

\[ g = \sum_{\chi \in \Ch(\TP_n), j = 1,\dots, k} c_{\chi,j} f_{\chi,j},\] where $c_{\mathbbm{1},j} = 0$ if $j \geq 2$.

Let us further choose constants $a_{\chi,j} \in \C$ whose value will be determined later. Let us set
\begin{equation}\label{eq:b_def}
B = \sum_{\chi \in \Ch(\TP_n)} \sum_{j = 1}^k c_{\chi,j} a_{\chi,j}
\end{equation}\label{eq:u_def}
Finally, define $u : G \to \C$ with the formula

\begin{equation}\label{eq:u_def2}  u(x,t) = \sum_{\chi \in \Ch(\TP_n)} \sum_{j = 1}^k \sum_{z \in \Phi}c_{\chi,j}\cdot \left( f_{\chi,j}\circ \pi_n (z) + a_{\chi_j}\right)\cdot  w(x * z^{-1}, \beta_{\chi,j} t) - B \end{equation}

\begin{theorem}\label{thm:periodic}
The function $u$ defined in \eqref{eq:u_def2} satisfies the heat equation on $G$ with initial condition $u(x,0) = g\circ \pi_n(x)$.
\end{theorem}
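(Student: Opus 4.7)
The plan is to decompose $u$ into pieces, one for each pair $(\chi,j)$, and verify the heat equation termwise using Lemma~\ref{lem:f_beta}, then compute $u(x,0)$ directly and observe that the correction constants $a_{\chi,j}$ are exactly balanced by the subtraction of $B$.

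First I would note that for each pair $(\chi,j)$ appearing with $c_{\chi,j} \neq 0$ we are not in the excluded case $\chi = \mathbbm{1}, j\geq 2$, so Claim~\ref{claim:f_beta} gives $\beta_{\chi,j}\tilde\Delta (f_{\chi,j}\circ\pi_n + a_{\chi,j}) = \Delta (f_{\chi,j}\circ\pi_n + a_{\chi,j})$ on $\Phi$, because both operators annihilate the constant $a_{\chi,j}$. Because $f_{\chi,j}\circ\pi_n$ is bounded (in fact takes finitely many values on $\Phi$), the weight $c_{\chi,j}(f_{\chi,j}\circ\pi_n + a_{\chi,j})$ is a bounded function on $\Phi$, so the Carne--Varopoulos super-exponential decay of $w$ ensures absolute uniform convergence of each inner sum $\sum_{z\in\Phi} c_{\chi,j}(f_{\chi,j}\circ\pi_n(z)+a_{\chi,j}) w(x*z^{-1},\beta_{\chi,j}t)$ on compact time intervals, together with its spatial $\Delta$ and time $\partial_t$ derivatives. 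Lemma~\ref{lem:f_beta} then applies to each such inner sum and shows that it satisfies $\Delta = \partial_t$ on $G$. Since the outer sum over $(\chi,j)$ is finite and the constant $B$ is annihilated by both $\Delta$ and $\partial_t$, the function $u$ satisfies the heat equation on $G$.

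For the initial condition, I plug in $t = 0$. Since $w(\cdot,0) = \delta_{x=1}$, only $z = x$ contributes in each inner sum, giving
\[
u(x,0) = \sum_{\chi\in\Ch(\TP_n)}\sum_{j=1}^k c_{\chi,j}\bigl(f_{\chi,j}\circ\pi_n(x)+a_{\chi,j}\bigr) - B.
\]
By the expansion $g = \sum_{\chi,j} c_{\chi,j} f_{\chi,j}$ (which is exact since $c_{\mathbbm{1},j}=0$ for $j\geq 2$), the first part of this sum equals $g\circ\pi_n(x) + \sum_{\chi,j} c_{\chi,j} a_{\chi,j} = g\circ\pi_n(x) + B$ by the definition of $B$ in \eqref{eq:b_def}. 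Hence $u(x,0) = g\circ\pi_n(x)$, which is the claimed initial datum.

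I do not anticipate a genuine obstacle here: the construction is essentially a bookkeeping exercise once Claim~\ref{claim:f_beta} and Lemma~\ref{lem:f_beta} are in place. The only subtlety worth stressing is the justification of termwise differentiation of the infinite sum in $z$, which is where Carne--Varopoulos is used; the choice of $B$ as in \eqref{eq:b_def} is precisely what makes the constants $a_{\chi,j}$ cancel in the initial condition, so these free parameters remain available to be tuned later (in the next section, where they are used to shift $u$ into the positive range).
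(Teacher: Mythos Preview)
Your proposal is correct and follows the same approach as the paper: invoke Claim~\ref{claim:f_beta} together with Lemma~\ref{lem:f_beta} termwise for the heat equation, then evaluate at $t=0$ using $w(\cdot,0)=\delta_{x=1}$ and the definition of $B$ to recover $g\circ\pi_n$. Your version is in fact slightly more careful than the paper's, since you spell out why the complex bounded weights $c_{\chi,j}(f_{\chi,j}\circ\pi_n+a_{\chi,j})$ still permit the termwise differentiation via Carne--Varopoulos decay.
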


\begin{proof}
That $\Delta u = \partial_t u$ holds follows from Lemma~\ref{lem:f_beta} combined with Claim~\ref{claim:f_beta}. The only thing we have to show is that $u$ satisfies the initial condition. This is a simple calculation based on the choice of $w(x,0) = \delta_{x=1}$.

\begin{multline*} u(x,0) = \sum_{\chi \in \Ch(\TP_n)} \sum_{j = 1}^k \sum_{z \in \Phi}c_{\chi,j}\cdot \left( f_{\chi,j}\circ \pi_n (z) + a_{\chi_j}\right) \cdot  w(x * z^{-1}, 0)  - B = \\ = \sum_{\chi \in \Ch(\TP_n)} \sum_{j = 1}^k \sum_{z \in \Phi}c_{\chi,j}\cdot \left( f_{\chi,j}\circ \pi_n (z) + a_{\chi_j}\right) \cdot  \delta_{x * z^{-1}=1}- B =\\= \sum_{\chi \in \Ch(\TP_n)} \sum_{j = 1}^k c_{\chi,j}\cdot \left( f_{\chi,j}\circ \pi_n (x) + a_{\chi_j}\right)-B =\\=  \sum_{\chi \in \Ch(\TP_n)} \sum_{j = 1}^k c_{\chi,j}\cdot f_{\chi,j}\circ \pi_n (x)=g(x)
\end{multline*}
\end{proof}

We are interested in non-negative real initial conditions $g(x)$ for which the solution $u(x,t)$ will also be real and non-negative. Then we can take the real part of both sides of \eqref{eq:u_def2} to obtain

\begin{equation}\label{eq:u_real_part}  u(x,t) = \sum_{\chi \in \Ch(\TP_n)} \sum_{j = 1}^k \sum_{z \in \Phi} \Re\left(c_{\chi,j}\cdot \left( f_{\chi,j}\circ \pi_n (z) + a_{\chi_j}\right)\right) \cdot  w(x * z^{-1}, \beta_{\chi,j} t) - \Re(B) 
\end{equation}
The main idea of the proof of Theorem~\ref{thm:main} is to express the solution $u(x,t)$ as a non-negative linear combination of solutions on $\tilde{G}$ for which the gradient estimate is already known. 

Our goal with introducing the constants $a_{\chi,j}$ is to force all coefficients appearing in \eqref{eq:u_real_part}  to be non-negative. This can be done by setting 
\begin{equation}\label{eq:a_choice} a_{\chi,j} = \left\{ \begin{array}{lcr} \frac{|c_{\chi,j}|}{c_{\chi,j}} \norm{f_{\chi,j}}_{\infty} &:& c_{\chi,j} \neq 0 \\ 0 &:& c_{\chi,j} = 0 \end{array} \right. \end{equation}
Unfortunately the use of the constants $a_{\chi,j}$ come at the cost of having to deal with the constant $B$. Next we show that $B$ can be bounded in terms of $g$ but independently of $n$.

\begin{lemma}
If the $a_{\chi,j}$'s are chosen according to \eqref{eq:a_choice} then $B = \sum a_{\chi,j} c_{\chi,j} \leq \sqrt{k} \norm{g}_2$.
\end{lemma}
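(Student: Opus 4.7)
The plan is to substitute the explicit choice \eqref{eq:a_choice} into the definition of $B$, observe that each term of the sum is then a non-negative real number, and bound the resulting expression factor-by-factor using Lemma~\ref{lem:sup_bound}, Cauchy--Schwarz, and Parseval's identity.

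First I would note that the choice \eqref{eq:a_choice} is rigged precisely so that $c_{\chi,j} a_{\chi,j} = |c_{\chi,j}| \cdot \norm{f_{\chi,j}}_\infty$ whenever $c_{\chi,j} \neq 0$, while the product vanishes trivially otherwise. Hence
\[
B = \sum_{\chi,j} |c_{\chi,j}| \cdot \norm{f_{\chi,j}}_\infty,
\]
which is in particular real and non-negative, so the inequality claimed in the lemma is actually meaningful.

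Next I would apply Lemma~\ref{lem:sup_bound} to replace each $\norm{f_{\chi,j}}_\infty$ by the uniform upper bound $\sqrt{k/|\Phi_n|}$, pull this constant out of the sum, and then estimate $\sum_{\chi,j}|c_{\chi,j}|$ via the Cauchy--Schwarz inequality. The total number of index pairs $(\chi,j)$ is $k \cdot |\Ch(\TP_n)| = k \cdot |\TP_n| = |\Phi_n|$, so
\[
\sum_{\chi,j}|c_{\chi,j}| \leq \sqrt{|\Phi_n|} \cdot \Bigl(\sum_{\chi,j} |c_{\chi,j}|^2\Bigr)^{1/2}.
\]

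Finally, since the family $\{f_{\chi,j}\}$ is an orthonormal basis of $\C^{\Phi_n}$ with respect to the scalar product \eqref{eq:scalar prod} (the subspaces $V_\chi$ for different $\chi$ are mutually orthogonal and their dimensions add up to $|\Phi_n|$), Parseval's identity gives $\sum_{\chi,j} |c_{\chi,j}|^2 = \norm{g}_2^2$. Assembling the three factors produces $B \leq \sqrt{k/|\Phi_n|} \cdot \sqrt{|\Phi_n|} \cdot \norm{g}_2 = \sqrt{k} \norm{g}_2$, as desired. The argument is essentially a one-line chain of inequalities once the role of $a_{\chi,j}$ is understood as converting a potentially cancelling complex sum into an $\ell^1$-norm of coefficients, so I do not anticipate a real obstacle; the only thing to double-check is that the forced vanishing of $c_{\mathbbm{1},j}$ for $j \geq 2$ only shrinks both the index set and the sum of squares, so it preserves the bound.
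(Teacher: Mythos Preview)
Your proof is correct and follows essentially the same approach as the paper: both combine the sup-norm bound of Lemma~\ref{lem:sup_bound}, Cauchy--Schwarz, and Parseval for the orthonormal system $\{f_{\chi,j}\}$. The only cosmetic difference is the order of operations---the paper applies Cauchy--Schwarz directly to $\sum |c_{\chi,j}|\,\norm{f_{\chi,j}}_\infty$ and then bounds $\sum \norm{f_{\chi,j}}_\infty^2 \leq |\Phi_n|\cdot k/|\Phi_n|$, whereas you first replace each $\norm{f_{\chi,j}}_\infty$ by the uniform bound and then Cauchy--Schwarz the remaining $\ell^1$-sum; the two computations are equivalent.
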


\begin{proof}
First, it is clear that $B \leq \sum |c_{\chi,j}|\cdot \norm{f_{\chi,j}}_\infty$. Let us recall that $c_{\chi,j} = \langle g, f_{\chi,j} \rangle $ Then, by Cauchy-Schwarz and Lemma~\ref{lem:sup_bound} we get
\[ B \leq \sqrt{  \sum |\langle g, f_{\chi,j} \rangle|^2} \sqrt{\sum \norm{f_{\chi,j}}_\infty^2}  \leq  \norm{g}_2 \sqrt{|\Phi_n| \frac{k}{\Phi_n}} = \sqrt{k} \norm{g}_2.\]
\end{proof}

We can summarize the results of this section as follows.

\begin{corollary}\label{cor:periodic}
Let $g : G_n \to [0,\infty)$ be orthogonal to the span of $\{ f_{\mathbbm{1},j} : 2\leq j \leq k\}$.  Let $u(x,t)$ denote solution of the heat equation on $G$ with initial conditions $u(x,0) = g\circ \pi_n(x)$. Then there exist a non-negative bounded weight function $q(\chi,j,z)$ such that 
\begin{equation}\label{eq:u_pos_comb} u(x,t) + \sqrt{k} \norm{g}_2 = \sum_{\chi, j, z} q(\chi,j,z) w(x*z^{-1},\beta_{\chi,j} t) \end{equation}
\end{corollary}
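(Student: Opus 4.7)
The plan is to combine the explicit formula \eqref{eq:u_real_part} supplied by Theorem~\ref{thm:periodic} with the specific choice \eqref{eq:a_choice} of the constants $a_{\chi,j}$, observe that the resulting coefficients are automatically non-negative, and then absorb the remaining additive constant into a single term of the expansion by invoking mass conservation of $w$ on $\tilde G$. The hypothesis on $g$ guarantees $c_{\mathbbm{1},j}=0$ for $j\ge 2$, so only pairs $(\chi,j)$ to which Claim~\ref{claim:f_beta} applies contribute to the representation, and \eqref{eq:u_real_part} is a valid formula for $u(x,t)$.

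With $a_{\chi,j}$ given by \eqref{eq:a_choice}, the product $c_{\chi,j}a_{\chi,j}=|c_{\chi,j}|\,\norm{f_{\chi,j}}_\infty$ is a non-negative real, so
\[ \tilde{q}(\chi,j,z):=\Re\!\left(c_{\chi,j}\cdot(f_{\chi,j}\!\circ\!\pi_n(z)+a_{\chi,j})\right)=\Re\!\left(c_{\chi,j}f_{\chi,j}\!\circ\!\pi_n(z)\right)+|c_{\chi,j}|\,\norm{f_{\chi,j}}_\infty\ge 0, \]
since $|\Re(c_{\chi,j}f_{\chi,j}(\pi_n z))|\le|c_{\chi,j}|\,\norm{f_{\chi,j}}_\infty$. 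The same reasoning shows $B$ itself is real and non-negative, and the preceding lemma gives $B\le\sqrt{k}\,\norm{g}_2$, so the excess constant $\sqrt{k}\,\norm{g}_2-B$ is non-negative.

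To fit this excess into the desired form, I would use mass conservation of the heat kernel on $\tilde G$: because $\tilde{G}$ has bounded degree and $w(\cdot,0)=\delta_{1}$, one has $\sum_{z\in\Phi}w(x*z^{-1},s)=1$ for every $x\in\Phi$ and every $s\ge 0$. Multiplying the constant $\sqrt{k}\,\norm{g}_2-B$ by this identity with $s=\beta_{\mathbbm{1},1}t=t$ and folding the result into the $(\mathbbm{1},1)$-summand produces new coefficients
\[ q(\mathbbm{1},1,z)=\tilde{q}(\mathbbm{1},1,z)+\sqrt{k}\,\norm{g}_2-B,\qquad q(\chi,j,z)=\tilde{q}(\chi,j,z)\text{ otherwise,} \]
all non-negative. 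Adding $\sqrt{k}\,\norm{g}_2$ to both sides of \eqref{eq:u_real_part} then yields exactly \eqref{eq:u_pos_comb}.

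The only remaining point is boundedness of $q$ and uniform convergence of the sum. Each $q(\chi,j,\cdot)$ depends on $z$ only through $f_{\chi,j}\!\circ\!\pi_n$, which is $\Phi_n$-periodic and bounded by $\sqrt{k/|\Phi_n|}$ via Lemma~\ref{lem:sup_bound}; combined with $|c_{\chi,j}|\le\norm{g}_2$ and the finiteness of the index set $\{(\chi,j)\}$ for fixed $n$, this gives a uniform bound on $q$. Uniform convergence of the resulting sum over $z$ then follows from the Carne--Varopoulos super-exponential decay of $w$, exactly as in the discussion following \eqref{eq:u_constr}. I do not expect a serious obstacle: the corollary is essentially a bookkeeping consequence of Theorem~\ref{thm:periodic}, the choice \eqref{eq:a_choice}, and the $B$-bound.
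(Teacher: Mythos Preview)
Your proposal is correct and follows exactly the route the paper intends: the corollary is stated there without a separate proof, simply as the combination of Theorem~\ref{thm:periodic}, the choice \eqref{eq:a_choice}, and the lemma bounding $B$. You supply the one detail the paper leaves implicit, namely using mass conservation $\sum_{z}w(x*z^{-1},s)=1$ to absorb the non-negative excess $\sqrt{k}\,\norm{g}_2-B$ into the $(\mathbbm{1},1)$-summand; this is legitimate since $\tilde G$ has bounded degree and is therefore stochastically complete.
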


\begin{remark}\label{rem:ortho} The condition that $g$ be orthogonal to the eigenfunctions $f_{\mathbbm{1},j} : 2\leq j \leq k$ is equivalent to requiring that the sum of $g$ along any coset of $\TP_n$ is the same. This is clear, since the orthogonal projection of $g$ onto $V_\mathbbm{1}$ is obtained by averaging $g$ along each coset, and thus $g$ is orthogonal to $f_{\mathbbm{1},j} : 2 \leq j \leq k$ if and only if this projection coincides with $f_{\mathbbm{1},1}$ -- the constant function.
\end{remark}

\section{Gradient estimate}

 Let us recall from~\cite{BHLLMY} that since $w(x,t)$ is a solution of the heat equation on $\tilde{G}$, it satisfies the Li-Yau estimate
\[ \frac{\tilde{\Gamma}(\sqrt{w})}{w} - \frac{\partial_t w}{2w} \leq \frac{C}{t} \] with a constant $C$ depending only on $|\tilde{S}|$. 
Then, by Remark~\ref{rem:commutes}, the function $w_{\beta,z}(x,t) = w(x*z^{-1},\beta t)$ satisfies
\[ \frac{\tilde{\Gamma}(\sqrt{w_{\beta,z}})}{w_{\beta,z}} - \frac{\partial_t w_{\beta,z}}{2\beta w_{\beta,z}} \leq \frac{C}{\beta t}, \] or equivalently
\begin{equation}\label{eq:betagradient} \beta \tilde{\Gamma}(\sqrt{w_{\beta,z}}) - \partial_t w_{\beta,z}/2  \leq \frac{C}{t} w_{\beta,z}. \end{equation}

This will be useful if we can establish a global lower bound on the possible $\beta$'s appearing. Our choices for $\beta$ will be the $\beta_{\chi,j}$ family introduced in the previous section.

\begin{lemma}\label{lem:beta_bound}
There is a constant $K$ depending only on $\Phi, S, \tilde{S}$ such that for any fixed $n$ we have $\beta_{\chi,j} \geq 1/K$ for all $\mathbbm{1} \neq \chi \in \Ch(\TP_n)$ and $1 \leq j \leq k$.
\end{lemma}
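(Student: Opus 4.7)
The plan is to deduce the bound from a single Dirichlet-type comparison between $G$ and $\tilde G$ that is valid for all functions on $\Phi_n$, and then evaluate it at the joint eigenfunctions $f_{\chi,j}$ of $\Delta$ and $\TD$, bypassing any direct spectral analysis of the matrices $M(\chi)$.

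First I would establish the Dirichlet-form comparison. Since $S$ generates $\Phi$ and $\tilde S \subset \Phi$ is finite, each $\tilde s \in \tilde S$ can be written as a word $\tilde s = s_{i_1} \cdots s_{i_\ell}$ of some length $\ell = \ell(\tilde s)$ in the generators $S$; set $L := \max_{\tilde s \in \tilde S} \ell(\tilde s)$, which depends only on $\Phi, S, \tilde S$. For any $f : \Phi_n \to \C$ the telescoping identity $f(x\tilde s) - f(x) = \sum_{j=1}^{\ell} (f(y_j s_{i_j}) - f(y_j))$ with $y_j := xs_{i_1}\cdots s_{i_{j-1}}$, together with Cauchy--Schwarz applied to the $\ell$ summands and translation-invariance of the sum over $\Phi_n$ (right-multiplication by $s_{i_1}\cdots s_{i_{j-1}}$ is a bijection of $\Phi_n$), gives
\[
\sum_{x \in \Phi_n} |f(x\tilde s) - f(x)|^2 \leq L^2 \sum_{x \in \Phi_n} \Gamma(f)(x).
\]
Summing over $\tilde s \in \tilde S$ yields
\[
\sum_{x \in \Phi_n} \Gammatt(f)(x) \leq K \sum_{x \in \Phi_n} \Gamma(f)(x), \qquad K := |\tilde S|\, L^2,
\]
valid for every $f$ and every $n$.

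Next I would apply this to the basis $f_{\chi,j}$. Because $\Delta$ and $\TD$ commute (Claim~\ref{claim:commutes}), both preserve $V_\chi$, and $\TD$ acts on $V_\chi$ as the scalar $\lambda_\chi$, every $f_{\chi,j}$ is automatically a joint eigenfunction with $\Delta f_{\chi,j} = \lambda_{\chi,j} f_{\chi,j}$ and $\TD f_{\chi,j} = \lambda_\chi f_{\chi,j}$. Using the standard identity $\sum_x \Gamma(f)(x) = 2\langle f, -\Delta f\rangle$ and its $\TD$-analogue, the comparison at $f = f_{\chi,j}$ becomes $-\lambda_\chi \|f_{\chi,j}\|^2 \leq K(-\lambda_{\chi,j}) \|f_{\chi,j}\|^2$. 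For $\chi \neq \mathbbm{1}$ both eigenvalues are strictly negative, so this rearranges to $\beta_{\chi,j} = \lambda_{\chi,j}/\lambda_\chi \geq 1/K$, with $K$ manifestly independent of $n$.

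I do not anticipate a serious technical obstacle; the main conceptual point is recognizing that the delicate-looking ratio of two $n$-dependent eigenvalues is governed by an $n$-independent integrated inequality between the two gradient forms, which sidesteps any perturbation analysis on the dual torus of $\TP$ near the trivial character. The only slightly non-routine step is the Cauchy--Schwarz telescoping that produces an $L^2$ (rather than $L$) factor, but this is standard for such path-chaining arguments on Cayley graphs.
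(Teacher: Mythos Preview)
Your proposal is correct and follows essentially the same argument as the paper: both proofs write each $\tilde s\in\tilde S$ as a word in $S$, telescope, apply Cauchy--Schwarz, and use translation-invariance of the sum over $\Phi_n$ to compare the two Dirichlet forms, then evaluate at the joint eigenfunction $f_{\chi,j}$. The only cosmetic differences are the order of presentation (you state the Dirichlet comparison for general $f$ first, the paper specializes immediately) and the explicit constant obtained (the paper tracks multiplicities to get $K=rM$ while you take the cruder $K=|\tilde S|\,L^2$).
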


\begin{proof}
Let's fix $\chi$ and $j$, and let's write $\beta = \beta_{\chi,j}$ and $f = f_{\chi, j}$ for short. 
By Claim~\ref{claim:f_beta} we have $\beta \TD f = \Delta f$. Since $f_{\chi,j}$ is defined on the finite graph $G_n$, we can take scalar product of both sides with $f_{\chi,j}$ and use ``integration by parts'' to get
\begin{equation}\label{eq:edge_sum} \beta_{\chi,j} \sum_{x \in G_n} \sum_{t \in \tilde{S}} (f(x*t)-f(x))^2 = \sum_{x \in G_n} \sum_{s \in S} (f(x*s)-f(x))^2\end{equation}

Each $t \in \tilde{S}$ can be written as a word in $S$. Suppose $t = s_{t,1} s_{t,2} \dots s_{t,r}$ where $s_{t,1},s_{t,2},\dots, s_{t,r} \in S$. (Of course $r = r(t)$ may depend on $t$.) Then we can use Cauchy-Schwarz to obtain
\begin{multline*} 
(f(x*t) - f(x))^2 = \left( \sum_{i=1}^r f(x*s_{t,1}*s_{t,2}* \cdots * s_{t,i}) - f(x*s_{t,1}*s_{t,2}*\cdots * s_{t,i-1})  \right)^2 \leq \\ r  \sum_{i=1}^r \left(f(x*s_{t,1}*s_{t,2}* \cdots * s_{t,i}) - f(x*s_{t,1}*s_{t,2}*\cdots * s_{t,i-1})\right)^2 .
\end{multline*}
Summing this for all $x\in G_n$, for any $i$ the expression $x * s_{t,1} * \cdots * s_{t,i-1}$ also runs exactly over each element of $G_n$. Thus we get 
\[ \sum_{x\in G_n} (f(x*t)-f(x))^2 \leq r \sum_{i=1}^{r(t)}( f(x* s_{t,i})-f(x))^2. \]
Now we sum this last expression for all $t \in \tilde{S}$ to get
\begin{multline*} \sum_{x\in G_n} \sum_{t \in \tilde{S}} (f(x*t)-f(x))^2 \leq r \sum_{x \in G_n} \sum_{t \in \tilde{S}} \sum_{i=1}^{r(t)} (f(x*s_{t,i})- f(x))^2 \leq \\ \leq  r M \sum_{x\in G_n}\sum_{s \in S} (f(x*s)-f(x))^2, \end{multline*}
 where $M$ is the largest of the multiplicities of the elements of the multi-set $\{ s_{t,i} : t\in \tilde{S}, 1 \leq i \leq r(t)\}$. 
 Combining the last estimate with \eqref{eq:edge_sum} we get $\beta \geq 1/(r M)$, so $K = r M$ is a valid choice.
\end{proof}

\begin{lemma}\label{lem:lincomb} $\tilde{\Gamma}\left(\sqrt{\cdot}\right)$ is a convex operator, that is for non-negative weights $c_i$ and non-negative functions $f_i$ we have
\[ \tilde{\Gamma}\left(\sqrt{\sum_i c_i f_i}\right) \leq \sum_i c_i \tilde{\Gamma}\left(\sqrt{f_i}\right). \]
\end{lemma}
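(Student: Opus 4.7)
The statement is a pointwise convexity inequality, so the plan is to reduce it from the operator level to an edge-wise numerical inequality, and then to a single application of Cauchy--Schwarz. Since $\tilde{\Gamma}(h)(x) = \sum_{y \sim x} (h(y)-h(x))^2$ is simply a sum over neighbors of $x$ in $\tilde{G}$, it suffices to establish, for every ordered pair $(x,y)$ with $y \sim x$,
\[
\left(\sqrt{F(y)} - \sqrt{F(x)}\right)^2 \;\leq\; \sum_i c_i\left(\sqrt{f_i(y)} - \sqrt{f_i(x)}\right)^2,
\]
where $F = \sum_i c_i f_i$. Summing over $y\sim x$ then yields the claimed inequality at the point $x$, and since $x$ is arbitrary, the operator inequality follows.

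To prove the edge inequality, I would abbreviate $a_i = f_i(x) \geq 0$ and $b_i = f_i(y) \geq 0$, so that $F(x) = \sum_i c_i a_i$ and $F(y) = \sum_i c_i b_i$. Expanding the squares on both sides, the terms $\sum c_i a_i$ and $\sum c_i b_i$ cancel, and the inequality reduces to
\[
\sum_i c_i \sqrt{a_i b_i} \;\leq\; \sqrt{\Big(\sum_i c_i a_i\Big)\Big(\sum_i c_i b_i\Big)}.
\]
This is exactly Cauchy--Schwarz applied to the vectors with entries $\sqrt{c_i a_i}$ and $\sqrt{c_i b_i}$:
\[
\sum_i \left(\sqrt{c_i a_i}\right)\left(\sqrt{c_i b_i}\right) \;\leq\; \Big(\sum_i c_i a_i\Big)^{1/2} \Big(\sum_i c_i b_i\Big)^{1/2}.
\]

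There is really no serious obstacle here; the only thing to double-check is that all quantities under the square roots are nonnegative, which is guaranteed by the hypotheses $c_i \geq 0$ and $f_i \geq 0$, and that the infinite sum (if the index set $i$ ranges over an infinite set, as in the application to \eqref{eq:u_pos_comb}) converges. In the intended application the $c_i$'s arise as the coefficients $q(\chi,j,z)$ which, by Corollary~\ref{cor:periodic} and the Carne--Varopoulos decay of $w$, give absolutely convergent sums at each vertex, so interchanging the finite sum over neighbors with the infinite sum over $i$ is harmless.
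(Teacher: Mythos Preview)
Your argument is correct and follows essentially the same route as the paper: reduce to the edge-wise inequality $\bigl(\sqrt{\sum c_i a_i}-\sqrt{\sum c_i b_i}\bigr)^2 \le \sum c_i(\sqrt{a_i}-\sqrt{b_i})^2$, expand, cancel, and apply Cauchy--Schwarz. You simply spell out the cancellation and the Cauchy--Schwarz step more explicitly than the paper does.
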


\begin{proof}
By the definition of $\tilde{\Gamma}$ all we have to check is that, assuming all sums are convergent and all numbers are non-negative,
\[ \left( \sqrt{\sum_i c_i a_i} - \sqrt{\sum_i c_i b_i}\right)^2 \leq \sum_i c_i \left(\sqrt{a_i} - \sqrt{b_i}\right)^2.\]
This is a simple Cauchy-Schwarz inequality after opening the brackets and canceling as much as possible.
\end{proof}

\begin{corollary}\label{cor:lincomb}For any nonnegative bounded weight function $q(\chi,j,z)$, we have
\[\tilde{\Gamma}\left(\sqrt{\sum_{\chi,j,z}q(\chi,j,z) w_{\beta_{\chi,j},z}}\right) \leq \sum_{\chi,j,z}q(\chi,j,z)\tilde{\Gamma}(\sqrt{w_{\beta_{\chi,j},z}})\]
\end{corollary}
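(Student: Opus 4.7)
The plan is to apply Lemma~\ref{lem:lincomb} essentially verbatim, so the work reduces to bookkeeping the indices and verifying that the various sums converge so that the inequality inside the proof of Lemma~\ref{lem:lincomb} makes sense termwise.

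First I would re-index. The triple $(\chi,j,z)$ runs over a countable index set $I = \Ch(\TP_n)\times\{1,\dots,k\}\times \Phi$; set $c_i = q(\chi,j,z) \geq 0$ and $f_i = w_{\beta_{\chi,j},z}$ for $i = (\chi,j,z)$. Then the left-hand side equals $\tilde{\Gamma}\!\left(\sqrt{\sum_i c_i f_i}\right)$ and the right-hand side equals $\sum_i c_i \tilde{\Gamma}(\sqrt{f_i})$, so the statement is exactly the conclusion of Lemma~\ref{lem:lincomb}.

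Next I would verify the convergence hypothesis needed to apply Lemma~\ref{lem:lincomb}. The outer sums over $\chi$ and $j$ are finite (of size $|\Ch(\TP_n)|\cdot k$), so the only issue is the sum over $z\in\Phi$. Since $q$ is bounded, $w(\,\cdot\,,\beta t)\geq 0$, and $w$ decays super-exponentially in the space variable by the Carne--Varopoulos bound used already in Section~2, the series $\sum_{z\in\Phi} q(\chi,j,z)\, w(x*z^{-1},\beta_{\chi,j} t)$ converges uniformly on compact sets of $G\times(0,\infty)$, as does the same series with each $w_{\beta_{\chi,j},z}(y,t)$ evaluated at any neighbor $y\sim x$ in $\tilde{G}$. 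This justifies exchanging sum and $\tilde{\Gamma}$ (which is a finite sum of squared pairwise differences at $x$ in $\tilde G$) in the argument of Lemma~\ref{lem:lincomb}.

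With convergence in hand, the inequality follows by invoking Lemma~\ref{lem:lincomb}. No step here is a real obstacle; the only place where one must be careful is the convergence verification, and even that is immediate from the Carne--Varopoulos estimate already cited in the paper.
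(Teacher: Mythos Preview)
Your proposal is correct and matches the paper's approach: the corollary is stated immediately after Lemma~\ref{lem:lincomb} with no separate proof, so the paper treats it as a direct application of that lemma with the obvious re-indexing. Your added convergence check via the Carne--Varopoulos bound is a reasonable elaboration of a point the paper leaves implicit.
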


We are ready to prove Theorem~\ref{thm:main}. We will actually show the following slightly stronger statement.

\begin{theorem}
Let $g \in L_2(G)$ a non-negative function. Let $u(x,t)$ denote the solution of the heat equation on $G$ with initial condition $u(x,0) = g(x) + k\norm{g}_2$. Then
\[ 
\frac{\Gammatt(\sqrt{u})}{K u} - \frac{\partial_t(\sqrt{u})}{\sqrt{u}} \leq \frac{C}{t}\ .
\]
\end{theorem}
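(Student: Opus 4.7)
My approach is to realise (a shift of) $u$ as a non-negative linear combination $\sum_i q_i w_{\beta_i,z_i}$ of the building blocks $w_{\beta,z}(x,t) := w(x*z^{-1},\beta t)$ via Corollary~\ref{cor:periodic}, and then to combine the Li--Yau estimate~\eqref{eq:betagradient} for each block with the uniform lower bound $\beta_{\chi,j}\geq 1/K$ from Lemma~\ref{lem:beta_bound} and the subadditivity of $\Gammatt(\sqrt{\cdot})$ from Corollary~\ref{cor:lincomb}.

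\textbf{Single-block step.} Rewrite \eqref{eq:betagradient} as
\[ \Gammatt(\sqrt{w_{\beta,z}}) \leq \tfrac{1}{\beta}\left(\tfrac{1}{2}\partial_t w_{\beta,z} + \tfrac{C}{t}\,w_{\beta,z}\right). \]
The bracketed right-hand side is automatically non-negative (since the left side is and $\beta>0$), so the bound $1/\beta\leq K$ upgrades this to
\[ \Gammatt(\sqrt{w_{\beta,z}}) \leq K\left(\tfrac{1}{2}\partial_t w_{\beta,z} + \tfrac{C}{t}\,w_{\beta,z}\right). \]
The constant function $\mathbbm{1}$ trivially satisfies the same inequality since $\Gammatt(\sqrt{\mathbbm{1}}) = \partial_t \mathbbm{1} = 0$. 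Hence for any non-negative combination $v = \sum_i q_i w_{\beta_i,z_i} + c\cdot \mathbbm{1}$, Corollary~\ref{cor:lincomb} together with the linearity of $\partial_t$ yields
\[ \Gammatt(\sqrt{v}) \leq \sum_i q_i\,\Gammatt(\sqrt{w_{\beta_i,z_i}}) \leq K\left(\tfrac{1}{2}\partial_t v + \tfrac{C}{t}\,v\right), \]
which, after dividing by $Kv$ and using $\partial_t(\sqrt v)/\sqrt v = \partial_t v/(2v)$, is exactly the desired inequality.

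\textbf{From Corollary~\ref{cor:periodic} to arbitrary $g$.} For $g$ of finite support, pick $n$ large enough that $g$ descends injectively to $\Phi_n$. If the coset sums of $g$ agree (Remark~\ref{rem:ortho}), Corollary~\ref{cor:periodic} writes $u_0(x,t) + \sqrt{k}\,\norm{g}_2 = \sum_i q_i w_{\beta_i,z_i}$, where $u_0$ solves the heat equation with initial data $g$. Then
\[ u(x,t) = u_0(x,t) + k\,\norm{g}_2 = \sum_i q_i w_{\beta_i,z_i} + (k-\sqrt{k})\,\norm{g}_2\cdot\mathbbm{1} \]
already has the form required for the single-block step. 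For non-orthogonal $g$, one first separates off the ``mean along cosets'' component, using the larger shift $k\,\norm{g}_2$ (rather than the minimal $\sqrt{k}\,\norm{g}_2$) to absorb that component into the constant building block while keeping all weights non-negative. Finally, general $g\in L_2(G)$ is handled by truncation $g_N := g\cdot \mathbbm{1}_{B_N}$ and letting $N\to\infty$ with $n=n(N)\to\infty$, appealing to Carne--Varopoulos decay for the uniform convergence of $u$, $\Gammatt(\sqrt u)$, and $\partial_t u$, and to the fact that the constants $K,C$ are independent of $n$.

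\textbf{Main obstacle.} The delicate step is this last reduction: controlling the coset-mean component of $g$ uniformly in $n$ so that all weights in the non-negative combination for $u$ remain non-negative under the shift $k\,\norm{g}_2$, and then passing the combined Li--Yau inequality through the double limit $N,n\to\infty$ without loss in the constants. By contrast, the single-block step is essentially mechanical once the combination representation is in hand.
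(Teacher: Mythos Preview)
Your single-block step (combining \eqref{eq:betagradient} with Lemma~\ref{lem:beta_bound} and Corollary~\ref{cor:lincomb}) is correct and is exactly the paper's mechanism, as is the overall scheme of periodicizing, applying Corollary~\ref{cor:periodic}, and passing to a limit via Carne--Varopoulos.

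The gap is in your handling of the orthogonality hypothesis of Corollary~\ref{cor:periodic}. You propose to ``separate off the mean-along-cosets component'' and ``absorb it into the constant building block''. But the projection $p$ of $g_n$ onto $\mathrm{span}\{f_{\mathbbm{1},j}:2\le j\le k\}$ is constant on each $\TP_n$-coset, so $\TD(p\circ\pi_n)=0$, yet $\Delta(p\circ\pi_n)\ne 0$; its heat evolution $P(x,t)$ on $G$ is therefore genuinely time-dependent and cannot be represented through the blocks $w_{\beta,z}$ for any $\beta>0$ --- this is precisely why Claim~\ref{claim:f_beta} excludes those eigenfunctions. A constant shift does not absorb a time-varying function, and the extra $(k-\sqrt k)\|g\|_2$ is not what compensates for it. Applying Corollary~\ref{cor:periodic} to $g_n-p$ instead is also blocked, since $g_n-p$ need not be non-negative.

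The paper's resolution is different and is the missing idea: rather than decomposing $g$, one \emph{modifies} $g$ on the far annulus $H_n\setminus H_{n/2}$ by adding non-negative mass so that all coset sums agree (Remark~\ref{rem:ortho}). The modified function $h_n'$ is then both non-negative and orthogonal, so Corollary~\ref{cor:periodic} applies verbatim; the modification inflates $\|\cdot\|_2$ by at most a factor $\sqrt k$, which is exactly why the shift $k\|g\|_2\ge \sqrt k\,\|h_n'\|_2$ suffices; and because the modification is supported at graph-distance tending to infinity from any fixed $x$, its effect on the solution vanishes as $n\to\infty$ by Carne--Varopoulos (Claim~\ref{claim:limit}).
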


\begin{proof}
For each $n > 1$, choose a subset $H_n \subset G$ that contains exactly one element from each set $\pi_n^{-1}(x) : x \in \Phi_n$ in such a way that $1 \in H_n$ but that this element 1 is as far from the boundary of $H_n$ as possible with respect to the graph distance. (The set $H_n$ will look like a (skew) ball around the element 1 in $G$.) In particular we have $H_1 \subset H_2 \subset H_3 \dots$, and the distance between 1 and the boundary of $H_n$ clearly goes to infinity as $n$ goes to infinity.

Now we describe a simple way to modify $g$ and  turn it into a function that satisfies the conditions of Corollary~\ref{cor:periodic}. Let $h_n$ denote the ``restriction'' of $g$ to $\Phi_n$ through $H_n$. That is, let $h_n : \Phi_n \to \R$ be the function defined by $h_n(x) = g( \pi_n^{-1}(x) \cap H_n)$. Note that $h_n \circ \pi_n$ coincides with $g$ on $H_n$ and is periodic. 

However $h_n$ still might not be orthogonal to the subspace spanned by $f_{\mathbbm{1},j} : 2 \leq j \leq k$. We can ensure this orthogonality, according to Remark~\ref{rem:ortho}, by modifying $g$ on $H_n \setminus H_{n/2}$ in such a way that for any coset $D$ of $\TP_n$ in $\Phi_n$, the expression $\sum_{x\in D} g( \pi_n^{-1}(x) \cap H_n )$ is independent of $D$. This increases $\norm{g_{|H_n}}_2$ at most $\sqrt{k}$-fold. (The worst case is when $g$ was 0 on all but one of the cosets.) Let $g'_n$ denote the modified function, and let $h'_n$ denote its restriction to $\Phi_n$ as explained in the previous paragraph. 

We get that $\norm{h'_n}_2 \leq \sqrt{k} \norm{g}_2$, and that $h'_n$ is orthogonal to the eigenfunctions $f_{\mathbbm{1},j} : 2 \leq j \leq k$. Finally $h'_n \circ \pi_n$ is a periodic function that coincides with $g$ on $H_{n/2}$. 
Let us denote by $u_n(x,t)$ the solution of the heat equation with initial conditions $u(x,0) = h'_n \circ \pi_n(x)$. Then by Corollary~\ref{cor:periodic} there is a bounded non-negative weight function $q(\chi,j,z)$ such that 
\[ u_n + \sqrt{k} \norm{h'_n}_2 = \sum_{\chi, j ,z} q(\chi,j,z) w_{\beta_{\chi,j}, z}. \] 
By Lemma~\ref{lem:beta_bound} there is a constant $K$ independent of $n$ such that each $\beta_{\chi,j} \geq 1/K$, hence by \eqref{eq:betagradient}, for every $\chi,j$ pair 
\[ \frac{1}{K} \Gammatt(\sqrt{w_{\beta_{\chi,j},z}}) - \partial_t w_{\beta_{\chi,j},z}/2 \leq \frac{C}{t} w_{\beta_{\chi,j},z}. \]
Thus by Corollary~\ref{cor:lincomb} the same holds for $u_n + \sqrt{k} \norm{h'_n}_2$, and thus also for $v_n = u_n + k \norm{g}_2$. So we get that for any point $x \in G$ and any $t \geq 0$
\begin{equation}\label{eq:v} \frac{\Gammatt(\sqrt{v_n})(x,t)}{K v_n(x,t)} - \frac{\partial_t v_n(x,t)}{2 v_n(x,t)} \leq \frac{C}{t}.\end{equation}

Finally, we let $n \to \infty$. The theorem then follows from Claim~\ref{claim:limit} below. 
\end{proof}

\begin{claim}\label{claim:limit} For any $x \in G$ and $ t \geq 0$ we have $\lim_{n\to \infty} u_n(x,t) = u(x,t)$ and $\lim_{n\to \infty} \partial_t u_n(x,t) = \partial_t u(x,t)$.
\end{claim}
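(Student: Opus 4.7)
The strategy is to represent both $u_n$ and the claim's $u$ via the heat semigroup on $G$ and show that replacing $g$ by $h'_n\circ\pi_n$ has vanishing effect as $n\to\infty$, since the two initial data agree on larger and larger balls around $x$. Let $p_t^G(x,y)$ denote the heat kernel of $G$. Since $\Delta$ annihilates constants, $\sum_{y\in G}p_t^G(x,y)=1$, so $p_t^G(x,\cdot)$ is a probability distribution on $G$. Because $g\in L_2(G)$ on the countable vertex set, $\norm{g}_\infty \leq \norm{g}_2$, and by $\norm{h'_n}_\infty \leq \norm{h'_n}_2 \leq \sqrt{k}\,\norm{g}_2$ the data $h'_n\circ\pi_n$ are uniformly bounded by $M:=\sqrt{k}\,\norm{g}_2$. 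Thus
\[ u_n(x,t) = \sum_{y\in G} p_t^G(x,y)\,h'_n\circ\pi_n(y), \qquad u(x,t)=\sum_{y\in G}p_t^G(x,y)\,g(y) \]
(with $u$ denoting the solution of the heat equation on $G$ with initial data $g$, as in the statement of the claim) converge absolutely.

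Fix $(x,t)$ and $\epsilon>0$. I would then choose $R$ so large that two conditions hold simultaneously: (i) the heat-kernel tail satisfies $\sum_{d(x,y)>R} p_t^G(x,y) < \epsilon$, possible because the total sum equals $1$; and (ii) $|g(y)|<\epsilon$ for every $y$ with $d(x,y)>R$, possible because $g\in L_2(G)$ forces the set $\{y:|g(y)|\geq\epsilon\}$ to be finite. Since the distance from $1$ to the boundary of $H_{n/2}$ tends to infinity, for all sufficiently large $n$ the ball of radius $R$ around $x$ is contained in $H_{n/2}$, and on that ball $h'_n\circ\pi_n$ coincides with $g$ by construction. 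Splitting $|u_n(x,t)-u(x,t)|$ into the inner and outer parts of this ball, the inner part vanishes and the outer part is bounded by $(M+\epsilon)\epsilon$. Letting $\epsilon\to 0$ yields $u_n(x,t)\to u(x,t)$.

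For the time derivative, I would invoke the heat equation itself to rewrite $\partial_t u_n(x,t)=\Delta u_n(x,t)=\sum_{y\sim_G x}(u_n(y,t)-u_n(x,t))$, a finite sum by local finiteness of $G$. Applied at $x$ and at each of its finitely many neighbors, the pointwise convergence already established immediately gives $\partial_t u_n(x,t)\to \Delta u(x,t)=\partial_t u(x,t)$. The same finite-sum-continuity argument, applied to $v_n = u_n + k\norm{g}_2$, also yields $\Gammatt(\sqrt{v_n})(x,t)\to \Gammatt(\sqrt{u+k\norm{g}_2})(x,t)$, which is what is actually needed to pass to the limit in \eqref{eq:v} and conclude the theorem.

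No single step of this plan is deep; the real content is the splitting argument. The one slightly non-obvious observation is that $g\in L_2(G)$ on a counting-measure vertex set automatically gives $|g(y)|\to 0$ at infinity (the set $\{y:|g(y)|\geq\epsilon\}$ being finite), and this is what allows one to control the outer tail by $\epsilon$ using only the fact that $p_t^G(x,\cdot)$ is a probability measure, with no need for any quantitative heat-kernel decay estimate such as Carne--Varopoulos.
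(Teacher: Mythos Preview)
Your proof is correct and takes a genuinely simpler route than the paper's. The paper writes $u-u_n=U_n$ with initial data $p_n=g-h'_n\circ\pi_n$ and then bounds $U_n(x,t)$ using the Carne--Varopoulos estimate $p_t^G(x,y)\lesssim e^{-d(x,y)^2/t}$ together with the polynomial volume growth of $G$; for the time derivative it repeats the same argument with initial data $\Delta p_n$. You instead use only that $p_t^G(x,\cdot)$ is a probability measure (valid because the Cayley graph has bounded degree, so $\Delta$ is bounded on $\ell^\infty$ and $e^{t\Delta}\mathbf{1}=\mathbf{1}$) together with the uniform bound on the initial data; for $\partial_t$ you exploit the locality of $\Delta$ to reduce to the already-established pointwise convergence at the finitely many neighbours. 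This avoids any quantitative kernel decay and any volume-growth input, and the $\partial_t$ step is cleaner than re-running the whole argument.

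One small remark: your condition (ii), that $\abs{g(y)}<\epsilon$ outside $B_R(x)$, is not actually needed. Since $\abs{g}\le\norm{g}_\infty\le\norm{g}_2\le M$, the outer tail is already at most $2M\epsilon$ from condition (i) alone. So the ``slightly non-obvious observation'' you highlight is in fact dispensable; the argument rests purely on stochastic completeness and the uniform bound on the data.
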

\begin{proof}
Let $p_n(x) = g(x) - h'_n \circ \pi_n(x)$, and denote $U_n(x,t)$ the solution of the heat equation with initial conditions $U_n(x,0)=p_n(x)$. Then clearly $u(x,t) = U_n(x,t)+u_n(x,t)$. Note that $\partial_t u, \partial_t u_n$, and $\partial_t U_n$ are also solutions of the heat equation with initial conditions $\Delta g, \Delta g-p_n, \Delta p_n$ respectively. 

For a fixed $x$, if $n$ is large enough, the initial condition function $p_n$ vanishes on a ball of radius $R_n$ around $x$. Since $p_n$ is bounded on the whole $G$, by the Carne-Varopoulos bound the amount of heat that can diffuse from a fixed starting point $y$ to $x$ behaves like $e^{-d(x,y)^2/t}$. Thus 
\[ U_n(x,t) \leq c \norm{p_n}_{\infty} \sum_{y : d(x,y) > R_n} e^{-\frac{d(x,y)^2}{t}}. \]
Since the volume of the balls of radius $R_n$ grow polynomial, this sum clearly decays exponentially to 0 as $R_n \to \infty$. The same holds for $\partial_t U_n$, since $\Delta p_n$ also vanishes on a ball of radius growing to infinity around any particular point $x$ as $n \to \infty$.
\end{proof}

Finally let briefly indicate what modifications are necessary to obtain Theorem~\ref{thm:logmain}.

\begin{proof}[Proof of Theorem~\ref{thm:logmain}]
From~\cite{Munch} we get that there is a constant $C>0$ such that for any solution $w : \tilde{G} \to [0,\infty)$ of the equation $\TD w = \partial_t w$ that is periodic with respect to $\langle s^n : s \in \tilde{S} \rangle$ satisfies 
\begin{equation}\label{eq:logliyau} -\TD \log w \leq \frac{C}{t}.
\end{equation}
The reason $w$ is a priori required to be periodic is because~\cite{Munch} only proves \eqref{eq:logliyau} for finite graphs. However, by the same method as used in the proof of Theorem~\ref{thm:main}, and in particular by Claim~\ref{claim:limit} this implies that \eqref{eq:logliyau} holds for arbitrary $w$. Then, just as in \eqref{eq:betagradient}, we find that $-\beta \log w_{\beta,z} \leq \frac{C}{t}$.  Thus the statement follows, as long as we provide the analog of Lemma~\ref{lem:lincomb} for $-\TD \log$. However $\TD$ is linear, and $\log$ is concave, so $-\TD \log$ is convex, so $-\log \sum c_i f_i \leq - \sum c_i \log f_i$ for real numbers $c_i \geq 0$ and functions $f_i \geq 0$. 
\end{proof}

\noindent
Gabor Lippner,\\
Department of Mathematics, Northeastern University, Boston, Massachusetts\\
\texttt{g.lippner@neu.edu}

\smallskip
\noindent
Shuang Liu,\\ 
Department of Mathematics, Renmin University of China, Beijing, China\\
\texttt{cherrybu@ruc.edu.cn}

\end{document}